\newtheorem{theorem}{Theorem}
\newtheorem{lemma}[theorem]{Lemma}
\newtheorem{proposition}[theorem]{Proposition}
\pgfplotsset{compat=1.14}
\pgfplotsset{
    closed dot/.style = {
        only marks,
        mark = *,
        mark size = 1.5pt,
    },
    open dot/.style = {
        only marks,
        mark = *,
        fill = white,
        mark size = 1.5pt,
    },
}
\newcommand{\floor}[1]{\left\lfloor #1 \right\rfloor}
\newcommand{\ceil}[1]{\left\lceil #1 \right\rceil}
\newcommand{\norm}[1]{\left\lVert #1 \right\rVert}
\DeclareMathOperator{\aff}{aff} 
\DeclareMathOperator{\lcm}{lcm} 
\newcommand{\restate@theorem@name}{}
\newtheorem*{restate@theorem}{\restate@theorem@name}
\newenvironment{restatetheorem}[1]{
    \renewcommand{\restate@theorem@name}{Theorem~\ref{#1}}
    \begin{restate@theorem}
}{
    \end{restate@theorem}
}
\begin{document}

\title{Semi-reflexive polytopes}
\author{
    Tiago Royer
}
\date{}
\maketitle

\begin{abstract}
    The Ehrhart function $L_P(t)$ of a polytope $P$
    is usually defined only for integer dilation arguments $t$.
    By allowing arbitrary real numbers as arguments
    we may also detect integer points
    entering (or leaving) the polytope in fractional dilations of $P$,
    thus giving more information about the polytope.
    Nevertheless,
    there are some polytopes that only gain new integer points
    for integer values of $t$;
    that is,
    these polytopes satisfy $L_P(t) = L_P(\floor t)$.
    We call those polytopes \emph{semi-reflexive}.
    In this paper,
    we give a characterization of these polytopes
    in terms of their hyperplane description,
    and we use this characterization to show that
    a polytope is reflexive
    if and only if both it and its dual are semi-reflexive.
\end{abstract}

\section{Introduction}

Given a polytope $P \subseteq \mathbb R^d$,
the classical Ehrhart lattice point enumerator $L_P(t)$ is defined as
\begin{equation*}
    L_P(t) = \#(tP \cap \mathbb Z^d), \qquad \text{integer $t > 0$.}
\end{equation*}
Here,
$\#(A)$ is the number of elements in $A$
and $tP = \{tx \mid x \in P\}$ is the dilation of $P$ by $t$.
The above definition may be extended
to allow arbitrary real numbers as dilation parameters;
we will assume this extension in this paper.
Moreover,
we will agree that $L_P(0) = 1$.

To minimize confusion,
we will denote real dilation parameters with the letter $s$,
so that $L_P(t)$ denotes the classical Ehrhart function
and $L_P(s)$ denotes the extension considered in this paper.
So,
for example,
$L_P(t)$ is just the restriction of $L_P(s)$ to integer arguments.

Every polytope $P \subset \mathbb R^d$ may be written as
\begin{equation}
    \label{intersection-representation}
    P = \bigcap_{i = 1}^n \{
            x \in \mathbb R^d \mid \langle a_i, x \rangle \leq b_i
        \},
\end{equation}
where each $a_i$ is a vector in $\mathbb R^d$
and each $b_i$ is a real number.

We define a polytope $P$ to be \emph{semi-reflexive}
if $P$ is rational and satisfies $L_P(s) = L_P(\floor s)$
for all $s \geq 0$.
We have the following characterization.

\begin{theorem}
    \label{thm:characterization}
    Let $P$ be a rational polytope.
    Then $P$ is a semi-reflexive polytope
    if and only if $P$ may be written as in~\eqref{intersection-representation},
    with all the $a_i$ being integers
    and all $b_i$ being either $0$ or $1$.
\end{theorem}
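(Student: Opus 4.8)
The plan is to translate the condition $L_P(s)=L_P(\floor s)$ into a statement about when individual lattice points first enter the dilates of $P$. For a lattice point $z\neq 0$ define its \emph{entry time} $s_z=\inf\{s>0 : z\in sP\}$. First I would record two preliminary facts. Since $P$ is bounded, $sP$ shrinks into any neighborhood of the origin as $s\to 0^+$, so for small $s$ the only possible lattice point in $sP$ is $0$ itself, and $0\in sP$ exactly when $0\in P$. As semi-reflexivity gives $L_P(s)=L_P(\floor s)=L_P(0)=1$ on $[0,1)$, it forces $0\in P$. Granting $0\in P$, convexity gives $s_1P\subseteq s_2P$ whenever $0\le s_1\le s_2$, so $L_P$ is a nondecreasing step function that jumps upward precisely at the values $s_z$, with $z\in sP$ for all $s\ge s_z$. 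Consequently $L_P$ is constant on each interval $[n,n+1)$ if and only if every jump sits at an integer. This yields the working reformulation: a polytope with $0\in P$ is semi-reflexive iff every lattice point that ever enters some dilate has an integer entry time.

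For the \textbf{if} direction, suppose $P=\bigcap_i\{x:\langle a_i,x\rangle\le b_i\}$ with $a_i\in\mathbb Z^d$ and $b_i\in\{0,1\}$. Then $\langle a_i,0\rangle=0\le b_i$ gives $0\in P$. For $z\in\mathbb Z^d$ we have $z\in sP$ iff $z/s\in P$, i.e. $\langle a_i,z\rangle\le s\,b_i$ for every $i$. The constraints with $b_i=0$ read $\langle a_i,z\rangle\le 0$ and are independent of $s$; the constraints with $b_i=1$ read $\langle a_i,z\rangle\le s$. Hence, once the $b_i=0$ constraints are met, the entry time equals $\max_{i:\,b_i=1}\langle a_i,z\rangle$ (and if this maximum is nonpositive then $z\in sP$ for all $s>0$, which forces $z=0$). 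As each $\langle a_i,z\rangle$ is an integer, every entry time is an integer, so $P$ is semi-reflexive by the reformulation.

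For the \textbf{only if} direction, write $P$ through its facets, choosing for each facet a primitive integer normal $a_i$; since $0\in P$ every $b_i\ge 0$. Facets through the origin already have $b_i=0$, so fix a facet $F_i$ with $b_i>0$ and consider the open cone $U_i=\mathbb R_{>0}\cdot\operatorname{relint}(F_i)$. For any $z=r\,y'\in U_i$ with $y'\in\operatorname{relint}(F_i)$, the point $y'$ makes only the $i$-th inequality tight, so facet $i$ is the unique binding constraint and $s_z=r=\langle a_i,z\rangle/b_i$. The key step is to show that the lattice points of $U_i$ realize every sufficiently large integer as a value of $\langle a_i,\cdot\rangle$: since $U_i$ is a full-dimensional open cone, for large $t$ it contains a ball of radius proportional to $t$ about $t z_0$ for a fixed $z_0\in U_i$, and because $a_i$ is primitive the values $\langle a_i,\cdot\rangle$ over the lattice points of such a ball take every integer in an interval whose length grows with $t$. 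Integrality of the entry times then demands $\langle a_i,z\rangle/b_i\in\mathbb Z$ for all large $\langle a_i,z\rangle$; applying this to two consecutive attained values gives $1/b_i\in\mathbb Z$, so $b_i=1/k_i$ for some positive integer $k_i$. Rewriting the facet inequality as $\langle k_i a_i,x\rangle\le 1$ produces an integer normal with right-hand side $1$, while the origin facets keep right-hand side $0$; this is the desired representation.

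The main obstacle is the lattice-covering claim inside $U_i$, together with the bookkeeping that identifies entry times with a single binding facet; both rest on $a_i$ being primitive (otherwise $\langle a_i,\cdot\rangle$ skips residues and the deduction $1/b_i\in\mathbb Z$ fails, which is exactly why non-primitive normals are permitted in the statement). A secondary point to address is the case where $P$ is not full-dimensional: then $0\in P$ forces every implicit equality to pass through the origin, so $\aff(P)$ is a linear subspace, and the entire argument runs verbatim inside $\aff(P)$ with the induced lattice $\mathbb Z^d\cap\aff(P)$ and the facets taken relative to it.
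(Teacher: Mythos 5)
Your proposal is correct, and its ``only if'' half takes a genuinely different route from the paper's. The paper forces $0 \in P$ by proving a stronger statement (Proposition~\ref{thm:weak-only-if-part}: if $0 \notin P$ then $L_P$ has infinitely many strict drops), then normalizes each facet inequality to $\langle u, x \rangle \leq 1$ and, assuming some coordinate of $u$ is non-integral, derives a contradiction by taking an integral point $y$ with $B_{3/2}(y)$ inside the cone over the facet (Lemma~\ref{thm:integral-points-in-cone}) and passing to $x_0 = y + e_j$ when $\langle u, y \rangle$ happens to be an integer; this manufactures a single non-integral dilation $s_0 = \langle u, x_0 \rangle$ at which $L_P$ jumps. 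You instead obtain $0 \in P$ from the elementary observation that $L_P \equiv 1$ on $[0,1)$, keep \emph{primitive integer} normals $a_i$ (this is where you use rationality, which the paper's Theorem~\ref{thm:only-if-part} does not need), and prove $1/b_i \in \mathbb{Z}$ by showing that the entry times realized by lattice points in the cone over the relative interior of $F_i$ include $m/b_i$ for every sufficiently large integer $m$, then differencing consecutive values. Both arguments live on the same geometric object---lattice points deep in the cone over a facet---but your primitivity/consecutive-integers step replaces the paper's $e_j$-perturbation and its case split, and your entry-time reformulation unifies the bookkeeping of jumps (the paper's Theorem~\ref{thm:if-part} does the corresponding ``if'' work with Iverson brackets, which is essentially equivalent); the price is that your argument needs rationality from the outset, whereas the paper's full-dimensional ``only if'' holds for arbitrary polytopes. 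Two steps you only sketch are sound but deserve the detail: the lattice-covering claim (primitivity gives $v \in \mathbb{Z}^d$ with $\langle a_i, v \rangle = 1$, so the lattice points of a radius-$R$ ball realize an interval of integers of length proportional to $R$), and the lower-dimensional case, where ``runs verbatim inside $\aff P$'' hides the need to extend integral functionals on the saturated lattice $\mathbb{Z}^d \cap \aff P$ to all of $\mathbb{Z}^d$ and to append the $b_i = 0$ inequalities cutting out $\aff P$---precisely what the paper's unimodular transform accomplishes.
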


Section~\ref{sec:examples} contains some examples of semi-reflexive polytopes.
The characterization above is proven in Section~\ref{sec:characterization}.
A similar proof yields another, similar,
characterization of semi-reflexive polytopes
in terms of their relative interiors;
this is discussed in Section~\ref{sec:interiors-of-polytopes}.
Finally,
in Section~\ref{sec:relation-with-reflexive}
we show how semi-reflexive polytopes relate with reflexive polytopes.

\section{Examples of semi-reflexive polytopes}
\label{sec:examples}

Here,
we will use the ``if'' part of the characterization
to provide some examples of semi-reflexive polytopes.

\begin{itemize}
    \item The unit cube $P = [0, 1]^d$.
        $P$ may be represented by the inequalities
        $x_i \geq 0$ and $x_i \leq 1$, for all $i$.

    \item The standard simplex,
        which is defined by the inequalities $x_i \geq 0$ for all $i$,
        and $x_1 + \dots + x_d \leq 1$.

    \item The cross-polytope.
        This polytope is defined by $|x_1| + \dots |x_d| \leq 1$.
        Each of the $2^d$ vectors $(\alpha_1, \dots \alpha_d) \in \{-1, 1\}^d$
        gives a bounding inequality of the form
        $\alpha_1 x_1 + \dots + \alpha_d x_d \leq 1$,
        all of which satisfy the characterization.

    \item Order polytopes~\cite{StanleyCounterexample}.
        Let $\prec$ be a partial order over the set $\{1, \dots, d\}$.
        The order polytope for the partial order $\prec$
        is the set of points $(x_1, \dots, x_d) \in \mathbb R^d$
        which satisfy $0 \leq x_i \leq 1$ for all $i$,
        and $x_i \leq x_j$ whenever $i \prec j$.

    \item Chain polytopes~\cite{StanleyCounterexample}.
        Let $\prec$ be a partial order over the set $\{1, \dots, d\}$.
        The chain polytope for the partial order $\prec$
        is the set of points $(x_1, \dots, x_d) \in \mathbb R^d$
        which satisfy $0 \leq x_i$ for all $i$,
        and $x_{i_1} + \dots + x_{i_k} \leq 1$
        for all chains $i_1 \prec i_2 \prec \dots \prec i_k$.

    \item Quasi-metric polytopes~\cite{Cris2017}.
        Let $G$ be a graph such that every vertex has degree $1$ or $3$.
        Identify its edge set with $\{1, \dots, d\}$.
        The quasi-metric polytope $\mathcal P_G$
        is defined to be the set of points $(x_1, \dots, x_d) \in \mathbb R^d$
        satisfying all inequalities of the form
        $x_i \leq x_j + x_k$ and $x_i + x_j + x_k \leq 1$,
        whenever $i$, $j$ and $k$ are the edges
        incident to a degree-$3$ vertex in $G$.

    \item And,
        as we will see in Section~\ref{sec:relation-with-reflexive},
        all reflexive polytopes are also semi-reflexive.
\end{itemize}

\section{
    Characterizing semi-reflexive polytopes
    in terms of their hyperplane description
}
\label{sec:characterization}
(We will use the Iverson bracket in this section.)

In this section,
mostly deal with full-dimensional polytopes.
If $P$ is full-dimensional,
then in the representation~\eqref{intersection-representation}
the number $n$ may be chosen to be the number of facets in $P$.
In this case,
each hyperplane $\{ x \mid \langle a_i, x \rangle = b_i \}$ intersects $P$ in a facet,
so that there is no redundant hyperplanes;
that is, such representation is minimal.
We'll assume such representations are always minimal for full-dimensional polytopes.

For full-dimensional polytopes,
there is a slight generalization
of the hyperplane description of Theorem~\ref{thm:characterization}.
Instead of demanding that $a_i$ is an integer for all $i$,
we will only require that $a_i$ is an integer if the corresponding $b_i$ is $1$.
Thus, if $b_i = 0$, then $a_i$ may be an arbitrary vector.
Observe that,
for rational polytopes,
$a_i$ will necessarily be a rational vector,
so when $b_i = 0$ we may multiply both sides by an appropriate integer constant
(the $\lcm$ of all denominators of the coordinates of $a_i$)
to get the same condition of Theorem~\ref{thm:characterization}.

One direction is easy.

\begin{theorem}
    \label{thm:if-part}
    Let $P$ be as in~\eqref{intersection-representation}.
    If all $b_i$ is either $0$ or $1$,
    and $a_i$ is integral whenever $b_i = 1$,
    then $L_P(s) = L_P(\floor s)$.
\end{theorem}

\begin{proof}
    \begin{equation*}
        L_P(s)
            = \sum_{x \in \mathbb Z^d} [x \in sP]
            = \sum_{x \in \mathbb Z^d} \prod_{i = 1}^n
                \big[\langle a_i, x \rangle \leq s b_i\big].
    \end{equation*}

    If $b_i = 0$, the term $[\langle a_i, x \rangle \leq s b_i]$
    reduces to $[\langle a_i, x \rangle \leq 0]$,
    which is constant for all $s$;
    thus $[\langle a_i, x \rangle \leq s b_i]
    = \big[\langle a_i, x \rangle \leq \floor s b_i\big]$.

    If $b_i = 1$,
    as $x$ and $a_i$ are integral,
    the number $\langle a_i, x \rangle$ is an integer,
    thus $[\langle a_i, x \rangle \leq s b_i]
    = \big[\langle a_i, x \rangle \leq \floor s b_i\big]$ again.

    Therefore,
    \begin{align*}
        L_P(s)  &= \sum_{x \in \mathbb Z^d} \prod_{i = 1}^n
                    \big[\langle a_i, x \rangle \leq s b_i\big] \\
                &= \sum_{x \in \mathbb Z^d} \prod_{i = 1}^n
                    \big[\langle a_i, x \rangle \leq \floor s b_i\big] \\
                &= \sum_{x \in \mathbb Z^d} [x \in \floor s P] \\
                &= L_P(\floor s).
                \qedhere
    \end{align*}
\end{proof}

For the other direction we need a lemma.

Denote the open ball with radius $\delta$ centered at $x$ by $B_\delta(x)$;
that is,
if $x \in \mathbb R^d$,
we have
\begin{equation*}
    B_\delta(x) = \{y \in \mathbb R^d \mid \norm{y - x} < \delta\}.
\end{equation*}

\begin{lemma}
    \label{thm:integral-points-in-cone}
    Let $K$ be a full-dimensional cone with apex $0$,
    and let $\delta > 0$ be any value.
    Then there are infinitely many integer points $x \in K$
    such that $B_\delta(x) \subset K$.
\end{lemma}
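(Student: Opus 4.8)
The plan is to exploit two facts: a full-dimensional cone contains balls of arbitrarily large radius, and every sufficiently large ball in $\mathbb R^d$ must contain an integer point. Combining them lets me fit a whole $\delta$-ball around a lattice point deep inside $K$, and then push that construction off to infinity to get infinitely many such points.

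First I would fix an interior point. Since $K$ is full-dimensional, $\mathrm{int}(K)$ is a nonempty open set, so I may choose some $p \in \mathrm{int}(K)$ with $p \neq 0$ together with a radius $\epsilon > 0$ such that $B_\epsilon(p) \subset K$. (If $K = \mathbb R^d$ the lemma is immediate, so I may assume $0$ lies on the boundary, which guarantees the existence of an interior point $p \neq 0$.) Next I would scale. Because $K$ is a cone, $\lambda K = K$ for every $\lambda > 0$, and a direct computation gives $\lambda B_\epsilon(p) = B_{\lambda\epsilon}(\lambda p)$, so that
\[
    B_{\lambda\epsilon}(\lambda p) = \lambda B_\epsilon(p) \subset \lambda K = K
    \qquad \text{for all } \lambda > 0.
\]
Thus $K$ contains a ball of radius $\lambda\epsilon$, whose radius exceeds any prescribed bound once $\lambda$ is large.

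The key ingredient is that the lattice $\mathbb Z^d$ has covering radius $\sqrt d / 2$: every point of $\mathbb R^d$ lies within distance $\sqrt d / 2$ of some integer point, so any ball of radius at least $\sqrt d / 2$ contains a lattice point. Consequently, whenever $\lambda$ is large enough that $\lambda\epsilon - \delta \geq \sqrt d / 2$, the slightly shrunken ball $B_{\lambda\epsilon - \delta}(\lambda p)$ contains an integer point $x$. For such an $x$, the triangle inequality shows that any $y$ with $\norm{y - x} < \delta$ satisfies $\norm{y - \lambda p} < \delta + (\lambda\epsilon - \delta) = \lambda\epsilon$, whence $B_\delta(x) \subset B_{\lambda\epsilon}(\lambda p) \subset K$, exactly as required.

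Finally, to produce \emph{infinitely many} such points, I would let $\lambda \to \infty$. Since $p \neq 0$, the centers $\lambda p$ have norm $\lambda\norm{p} \to \infty$, while each integer point $x$ found above stays within bounded distance of its center; hence the points $x$ are forced to leave every bounded region, and infinitely many distinct ones arise. The step I expect to require the most care is the covering-radius estimate, but it is standard: the cube of side $2r / \sqrt d$ inscribed in a ball of radius $r$ has side at least $1$ whenever $r \geq \sqrt d / 2$, and every axis-aligned cube of side at least $1$ contains an integer point.
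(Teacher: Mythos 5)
Your proof is correct, but it takes a genuinely different route from the paper's. The paper chooses a \emph{rational} point $x$ in the interior of $K$, with $B_\varepsilon(x) \subseteq K$, scales to get $B_{\lambda\varepsilon}(\lambda x) \subseteq K$, and then simply restricts $\lambda$ to the infinitely many positive integers that clear the denominators of $x$, so that $\lambda x$ itself is the desired lattice point; the only ingredients are density of rational points and closure of cones under dilation. You instead replace the rationality/denominator-clearing step by a geometry-of-numbers fact: the covering radius of $\mathbb Z^d$ is $\sqrt d / 2$, so once the ball $B_{\lambda\epsilon}(\lambda p)$ sits deep enough inside $K$, the lattice point nearest to $\lambda p$ lies within $\sqrt d/2$ of the center and its $\delta$-ball stays inside $K$ by the triangle inequality. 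This buys you freedom in the choice of $p$ (any interior point, rational or not) and a quantitative refinement --- a suitable lattice point exists within $\sqrt d/2$ of \emph{every} sufficiently deep dilate $\lambda p$ --- at the cost of invoking the covering-radius estimate, which the paper's argument avoids entirely. One wording caveat: for your final step (``each integer point $x$ found above stays within bounded distance of its center'') to be justified, you must take $x$ to be the lattice point within $\sqrt d/2$ of $\lambda p$ supplied by the covering-radius argument, not an arbitrary integer point of the shrunken ball $B_{\lambda\epsilon - \delta}(\lambda p)$, whose distance to the center could grow like $\lambda\epsilon$; with that reading, $\norm{x} \geq \lambda \norm{p} - \sqrt d/2 \to \infty$, so the points do leave every bounded region and infinitely many distinct ones arise, as you claim.
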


That is,
there are many points which are ``very inside'' $K$.

\begin{proof}
    Choose $x$ to be any rational point in the interior of the cone.
    By definition,
    there is some $\varepsilon > 0$ with $B_\varepsilon(x) \subseteq K$.
    For any $\lambda > 0$,
    we have
    \begin{equation*}
        \lambda B_\varepsilon(x) = B_{\lambda \varepsilon}( \lambda x ) \subseteq K.
    \end{equation*}

    For all sufficiently large $\lambda$,
    we have $\lambda \varepsilon > \delta$,
    so we just need to take the infinitely many integer $\lambda$
    such that $\lambda x$ is an integer vector.
\end{proof}

We'll need the fact that these integral points are distant from the boundary
only in the proof of Theorem~\ref{thm:only-if-part}.
For the next theorem,
existence of infinitely many such points would be enough.

\begin{proposition}
    \label{thm:weak-only-if-part}
    Let $P$ be a full-dimensional polytope.
    If $0 \notin P$,
    then $L_P(s)$ is not a nondecreasing function.
    In fact, $L_P(s)$ has infinitely many ``drops'';
    that is,
    there are infinitely many points $s_0$
    such that $L_P(s_0) > L_P(s_0 + \varepsilon)$
    for sufficiently small $\varepsilon > 0$.
\end{proposition}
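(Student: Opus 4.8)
The plan is to track, for each integer point, the \emph{set of dilation parameters} at which it lies in $sP$, and then to show that a point leaving $sP$ forces a genuine downward jump of $L_P$ to its right. Writing $P$ as in~\eqref{intersection-representation} with a minimal (facet) description, I first note that $0 \notin P$ forces some $b_i < 0$, since $0 \in P$ would require $0 = \langle a_i, 0 \rangle \le b_i$ for every $i$. For $s > 0$ we have $sP = \{x : \langle a_i, x \rangle \le s b_i \text{ for all } i\}$, so for a fixed integer point $z$ the condition $z \in sP$ reads $\langle a_i, z \rangle \le s b_i$ for all $i$. The constraints with $b_i > 0$ impose lower bounds on $s$, those with $b_i < 0$ impose upper bounds, and those with $b_i = 0$ do not involve $s$ at all; hence the set of admissible $s$ is a closed interval $[s_1(z), s_2(z)]$, and because some $b_i < 0$ exists, the upper endpoint $s_2(z) = \min_{i : b_i < 0} \langle a_i, z \rangle / b_i$ is finite. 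Thus each relevant $z$ \emph{leaves} $sP$ exactly at $s = s_2(z)$.

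Second, I would establish that $L_P$ is locally a finite step function. Since $P$ is bounded and $0 \notin P$, the swept region $\bigcup_{0 \le s \le M} sP$ is bounded for every $M$, hence contains only finitely many integer points; consequently only finitely many of the intervals $[s_1(z), s_2(z)]$ meet $[0, M]$. Therefore, on each bounded range, $L_P$ is a finite sum of indicators of closed intervals, so it is piecewise constant with finitely many jumps and has one-sided limits everywhere. I expect this to be the step carrying the real weight: it is exactly what rules out the \emph{a priori} possibility that infinitely many integer points enter immediately to the right of a leaving event and cancel the drop.

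With local finiteness in hand, a leaving event yields a drop almost for free. If $s_0 = s_2(w)$ for some integer $w$, then $w$ is counted in $L_P(s_0)$ (it lies in the closed polytope $s_0 P$) but not in $sP$ for any $s > s_0$. Comparing the count at $s_0$ with the right limit, every interval surviving strictly past $s_0$ is counted in both, while the points with $s_2 = s_0$ are counted only at $s_0$, so $L_P(s_0) = L_P(s_0^+) + \#\{w : s_2(w) = s_0\}$ and hence $L_P(s_0) > L_P(s_0^+)$. As $L_P$ is integer valued and constant on a small right-neighbourhood of $s_0$ (only finitely many events lie nearby), we get $L_P(s_0 + \varepsilon) = L_P(s_0^+) < L_P(s_0)$ for all sufficiently small $\varepsilon > 0$, which is precisely a drop at $s_0$.

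Finally, to produce infinitely many \emph{distinct} drop locations I would apply Lemma~\ref{thm:integral-points-in-cone} to the cone generated by a facet lying between $0$ and $P$. Fix $i^*$ with $b_{i^*} < 0$, let $F = P \cap \{x : \langle a_{i^*}, x \rangle = b_{i^*}\}$ be the corresponding facet, and set $C = \{\lambda y : \lambda \ge 0,\ y \in F\}$; this cone is full-dimensional, since its generators lie in a hyperplane missing the origin, and has apex $0$. The lemma then yields infinitely many integer points lying in the interior of $C$ (each comes with a ball inside $C$; here only their existence and interiority matter, not the radius). For such a point, written $z = \lambda y$ with $y$ in the relative interior of $F$, every other facet $j \ne i^*$ with $b_j < 0$ satisfies $\langle a_j, y \rangle < b_j$ strictly, so $\langle a_j, z \rangle / b_j > \lambda$, whereas $i^*$ contributes exactly $\lambda$; hence $s_2(z) = \lambda$, attained uniquely through $i^*$. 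Since these values are unbounded as $z$ ranges over the unbounded interior integer set of $C$, they take infinitely many distinct values, each of which is a drop by the previous paragraph. This gives infinitely many drops and in particular shows $L_P$ is not nondecreasing.
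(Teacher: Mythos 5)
Your proof is correct and takes essentially the same route as the paper: both arguments find a facet $F$ with negative $b_{i^*}$, apply Lemma~\ref{thm:integral-points-in-cone} to the full-dimensional cone over $F$ to get infinitely many integer points lying on dilates $s_0 F$, show each such point leaves the dilated polytope immediately after $s_0$ while a finiteness argument rules out new integer points entering, and conclude there are infinitely many distinct drop locations. The only difference is presentational: you package the ``no new points enter'' step as local finiteness of the membership intervals $[s_1(z), s_2(z)]$, whereas the paper uses a uniform bounding box $[-N,N]^d$ and a minimum over finitely many strict-inequality margins $\delta_x$ --- the same finiteness argument in different clothing.
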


\begin{proof}
    Writing $P$ as in~\eqref{intersection-representation},
    we conclude at least one of the $b_i$ must be negative
    (because the vector $0$ satisfies every linear restriction where $b_i \geq 0$).
    Equivalently (dividing both sides by $b_i$),
    there is a half-space of the form $\{x \mid \langle u, x \rangle \geq 1\}$
    such that some facet $F$ of $P$ is contained in
    $\{x \mid \langle u, x \rangle = 1\}$.

    Now, consider the cone $\bigcup_{\lambda > 0} \lambda F$.
    The previous lemma says
    there is an integral point $x_0$ in this cone,
    and so by its definition we have $x_0 \in s_0 F$ for some $s_0 > 0$;
    thus, $x_0 \in s_0 P$.
    We'll argue that, for all sufficiently small $\varepsilon > 0$,
    the polytopes $(s_0 + \varepsilon)P$ do not contain any integral point
    which is not present in $s_0 P$ (Figure~\ref{fig:polytope-not-containing-origin}).

    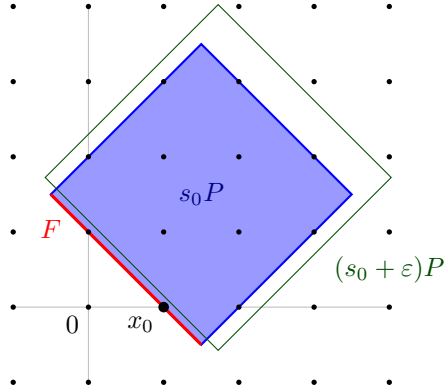
\begin{figure}[t]
        \centering
        \begin{tikzpicture}
            \def\polytope{(1.5, -0.5) -- ++(-2, 2) -- ++(2, 2) -- ++(2, -2) -- cycle}
            \filldraw[blue, thick, fill = blue!40] \polytope;
            \node [blue!50!black] at (1.5, 1.5) {$s_0 P$};

            \draw[scale = 1.15, green!30!black] \polytope;
            \node[green!30!black] at (4, 0.5) {$(s_0+\varepsilon) P$};

            \draw[very thick, red] (1.5, -.5) -- ++(-2, 2)
                node[below=0.2cm] {$F$};

            \begin{pgfonlayer}{background}
                \draw[lightgray] (-1, 0) -- (4, 0)
                            (0, -1) -- (0, 4);
            \end{pgfonlayer}{background}
            \foreach \x in {-1, ..., 4}
                \foreach \y in {-1, ..., 4}
                    \fill (\x, \y) circle [radius = 1pt];
            \node [below left] at (0, 0) {$0$};

            \fill (1, 0) circle [radius=2pt]
                node [below left] {$x_0$};
        \end{tikzpicture}

        \caption[
            If the polytope is dilated it will lose the point $x_0$.
        ]{
            The polytope $P$,
            when dilated,
            loses the point $x_0$,
            but if the dilation is small enough
            then it does not gain any new integral point.
            Therefore, $L_P(s)$ will decrease from $s_0$ to $s_0 + \varepsilon$.
        }
        \label{fig:polytope-not-containing-origin}
    \end{figure}

    For small $\varepsilon$ (say, $\varepsilon < 1$),
    all these polytopes are ``uniformly bounded'';
    that is, there is some $N$ such that $(s_0 + \varepsilon)P \subseteq [-N, N]^d$
    for all $0 \leq \varepsilon < 1$.
    Each integral $x$ in $[-N, N]^d$ which is not in $s_0 P$
    must violate a linear restriction of the form $\langle a_i, x \rangle \leq s_0 b_i$;
    that is, $\langle a_i, x \rangle > s_0 b_i$ for some $i$.
    As we're dealing with real variables,
    we also have $\langle a_i, x \rangle > (s_0 + \varepsilon) b_i$
    for all sufficiently small $\varepsilon$,
    say, for all $\varepsilon < \delta_x$ for some $\delta_x > 0$.
    Now, as there is a finite number of such relevant integral $x$,
    we can take $\delta$ to be the smallest of all such $\delta_x$;
    then if $0 < \varepsilon < \delta$
    every integral point of $(s_0 + \varepsilon)P$ also appears in $s_0 P$.

    But the special restriction $\langle u, x \rangle \geq 1$,
    considered above,
    ``dilates'' to the linear restriction
    $\langle u, x \rangle \geq s_0 + \varepsilon$ in $(s_0 + \varepsilon)P$.
    Since $x_0$ satisfy this restriction with equality for $\varepsilon = 0$,
    for any $\varepsilon > 0$ we'll have $x_0 \notin (s_0 + \varepsilon)P$.
    Therefore,
    not only the dilates $(s_0 + \varepsilon)P$ do not contain new integral points
    (for small enough $\varepsilon$),
    but actually these dilates lose the point $x_0$ if $\varepsilon > 0$.
    Thus,
    $L_P(s_0) > L_P(s_0 + \varepsilon)$ for all sufficiently small $\varepsilon > 0$.

    Finally,
    there are infinitely many integral $x_0$ in the cone
    $\bigcup_{\lambda > 0} \lambda F$;
    thus we have infinitely many different values of $\norm{x_0}$
    and hence of $s_0$,
    and the reasoning above shows $L_P(s)$ ``drops'' in every such $s_0$.
\end{proof}

A simple consequence of this proposition is that
if $s_0$ is a point where $L_P(s)$ ``drops'',
then in the interval $\big[\!\floor{s_0}, \floor{s_0} + 1\big)$
the function $L_P(s)$ will not be a constant function,
so we cannot have $L_P(s) = L_P(\floor s)$ for all $s$
if $P$ does not contain the origin.

We're now able to show that,
for full-dimensional polytopes,
the converse of Theorem~\ref{thm:if-part} holds.

\begin{theorem}
    \label{thm:only-if-part}
    Let $P$ be a full-dimensional polytope.
    If $L_P(s) = L_P(\floor s)$,
    then the polytope $P$ can be written as in~\eqref{intersection-representation}
    where each $b_i$ is either $0$ or $1$,
    and when $b_i = 1$ the vector $a_i$ must be integral.
\end{theorem}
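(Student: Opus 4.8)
The plan is to first force the origin into $P$, then normalize the facet inequalities, and finally treat each facet separately using the cone it spans together with Lemma~\ref{thm:integral-points-in-cone}. To begin, observe that $L_P(s) = L_P(\floor s)$ makes $L_P$ constant on every interval $\big[\!\floor s, \floor s + 1\big)$, so it can have no ``drop'' strictly inside such an interval. By the consequence of Proposition~\ref{thm:weak-only-if-part} recorded just after its proof, a full-dimensional polytope avoiding the origin must exhibit such drops; hence the hypothesis forces $0 \in P$. Writing $P$ in its minimal facet representation~\eqref{intersection-representation}, membership $0 \in P$ gives $0 = \langle a_i, 0\rangle \leq b_i$, so every $b_i \geq 0$. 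For each facet whose hyperplane passes through the origin I leave the inequality unchanged, so $b_i = 0$; for each facet with $b_i > 0$ I divide through by $b_i$ and rename the normal $u_i := a_i/b_i$, obtaining $b_i = 1$. What remains is to show that each such $u_i$ is integral.

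Next I would fix one facet $F$ with normalized normal $u$ and $b = 1$, and study the cone $K = \bigcup_{\lambda > 0} \lambda F$. For a point $x = \lambda y$ with $y \in F$ and $\lambda > 0$, each inequality $\langle a_j, y\rangle \leq b_j$ of $P$ scales to $\langle a_j, x\rangle \leq \lambda b_j$. Since $y \in F$ saturates $\langle u, y\rangle = 1$ while $\langle u_k, y\rangle \leq 1$ for the other $b = 1$ facets and $\langle a_j, y\rangle \leq 0$ for the $b = 0$ facets, one checks that $x$ satisfies every $b = 0$ constraint and that $\langle u, x\rangle = \lambda = \max_k \langle u_k, x\rangle$. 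Consequently $x \in sP$ exactly when $s \geq \langle u, x\rangle$, so $\langle u, x\rangle$ is the precise dilation at which $x$ enters the family $sP$. Because $L_P(s) = \#(sP \cap \mathbb Z^d)$ is nondecreasing in $s$ and constant on each unit interval $\big[\!\floor s, \floor s + 1\big)$, any dilation at which a new integer point enters must itself be an integer. Therefore $\langle u, x\rangle \in \mathbb Z$ for every integer point $x \in K$.

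Finally, this is where I would invoke the full strength of Lemma~\ref{thm:integral-points-in-cone}, using that the lattice points it produces lie deep inside the cone. The cone $K$ is full-dimensional, since $\aff F$ is a hyperplane missing the origin; so the lemma supplies an integer point $x \in K$ with $B_\delta(x) \subseteq K$ for any prescribed $\delta$. Choosing $\delta > 1$ guarantees that each translate $x + e_\ell$ by a standard basis vector again lies in $K \cap \mathbb Z^d$ (as $\norm{e_\ell} = 1 < \delta$). Then $\langle u, e_\ell\rangle = \langle u, x + e_\ell\rangle - \langle u, x\rangle$ is a difference of two integers for every $\ell$, whence $u = (\langle u, e_1\rangle, \dots, \langle u, e_d\rangle) \in \mathbb Z^d$. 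Applying this to each $b = 1$ facet yields the required representation.

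I expect the main obstacle to be the middle step: verifying that on the cone $K$ the inequality defining $F$ is exactly the binding one, so that the entering dilation of an integer $x \in K$ equals $\langle u, x\rangle$, and then arguing from monotonicity of $L_P$ together with the hypothesis that no integer point may enter at a noninteger dilation. Once this is in place, the rest is routine: the normalization is immediate from $0 \in P$, and the passage from ``$\langle u, \cdot\rangle$ integral on a cluster of nearby lattice points'' to ``$u$ integral'' follows directly from the lemma.
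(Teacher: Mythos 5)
Your proposal is correct and follows essentially the same route as the paper: forcing $0 \in P$ via Proposition~\ref{thm:weak-only-if-part}, normalizing each $b_i$ to $0$ or $1$, observing that an integer point of the cone $K$ over a facet enters $sP$ exactly at dilation $\langle u, x\rangle$ (which the hypothesis forces to be an integer), and invoking Lemma~\ref{thm:integral-points-in-cone} with $\delta > 1$ so that unit translates of a deep lattice point stay in $K$. The only difference is organizational: the paper argues by contradiction, exhibiting a single witness ($y$ or $y + e_j$) with non-integer entering dilation, whereas you prove directly that all entering dilations are integers and recover each coordinate $u_\ell = \langle u, x + e_\ell\rangle - \langle u, x\rangle$ as a difference of integers — an equivalent, slightly cleaner packaging of the same argument.
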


\begin{proof}
    Since $0 \in P$, by the previous proposition,
    we know we can write $P$ as in~\eqref{intersection-representation},
    with each $b_i$ being nonnegative.
    By dividing each inequality by the corresponding $b_i$ (if $b_i \neq 0$),
    we may assume each $b_i$ is either $0$ or $1$.
    Now we must show that when $b_i = 1$,
    the vector $a_i$ will be integral.

    Let $\langle u, x \rangle \leq 1$ be one of the linear restrictions where $b_i = 1$.
    Using an approach similar to the proof of the previous proposition,
    we'll show $u$ must be an integral vector.

    Let $F$ be the facet of $P$ which is contained in $\{\langle u, x \rangle = 1\}$,
    and again define $K = \bigcup_{\lambda \geq 0} \lambda F$.
    Suppose $u$ has a non-integer coordinate;
    first, we'll find an integral point $x_0$ and a non-integer $s_0 > 0$
    such that $x_0 \in s_0 F$.

    By Lemma~\ref{thm:integral-points-in-cone}
    (using $\delta = \frac32$),
    there is some integral $y \in K$ such that $B_{\frac32}(y) \subset K$.
    If $\langle u, y \rangle$ is not an integer,
    we may let $x_0 = y$ and $s_0 = \langle u, y \rangle$;
    then $x_0$ is an integral point which is in the relative interior of $s_0 F$.
    If $\langle u, y \rangle$ is an integer,
    as $u$ is not an integral vector,
    some of its coordinates is not an integer,
    say the $j$th;
    then $\langle u, y + e_j \rangle$ will not be an integer,
    so (as $y + e_j \in B_{\frac32}(y) \subset K$)
    we may let $x_0 = y + e_j$ and $s_0 = \langle u, y + e_j \rangle$
    to obtain our desired integral point which is in a non-integral dilate of $F$.

    We have $sP \subset s_0 P$ for $s < s_0$,
    but $x_0 \notin sP$ for any $s < s_0$,
    so $L_P(s) < L_P(s_0)$ for all $s < s_0$.
    As $s_0$ is not an integer (by construction),
    this shows that $L_P(\floor{s_0}) < L_P(s_0)$,
    a contradiction.
\end{proof}

Finally,
using unimodular transforms,
we may show the characterization for rational polytopes.

\begin{restatetheorem}{thm:characterization}
    Let $P$ be a rational polytope
    written as in~\eqref{intersection-representation}.
    Then $L_P(s) = L_P(\floor s)$
    if and only if all $a_i$ are integers
    and all $b_i$ are either $0$ or $1$.
\end{restatetheorem}

\begin{proof}
    The ``if'' part is Theorem~\ref{thm:if-part},
    so assume that $L_P(s) = L_P(\floor s)$.
    By Proposition~\ref{thm:weak-only-if-part},
    we must have $0 \in P$.
    If $P$ is full-dimensional,
    we just need to apply Theorem~\ref{thm:only-if-part}:
    if any of the $a_i$ is non-integer,
    then it must be rational (because $P$ is rational)
    and the corresponding $b_i$ must be zero,
    so we may just multiply the inequality
    by the $\lcm$ of the denominators of the coordinates of $a_i$.
    Thus, assume that $P$ is not full-dimensional.

    Let $L = \aff P$,
    the affine hull of $P$.
    Since $P$ contains the origin,
    $L$ is a vector space;
    since $P$ is rational,
    $L$ is spanned by integer points.
    Let $\dim P$ be the dimension of $P$ (and of $L$)
    and let $d$ be the dimension of the ambient space
    (so that $P \subseteq \mathbb R^d$).
    Then there is a unimodular transform $M$
    which maps $L$ to $\mathbb R^{\dim P} \times \{0\}^{d - \dim P}$.

    If $P$ is contained in the half-space
    \begin{equation*}
        \{x \in \mathbb R^d \mid \langle a, x \rangle \leq b\},
    \end{equation*}
    then $MP$ is contained in the half-space
    \begin{equation*}
        \{x \in \mathbb R^d \mid \langle M^{-t}a, x \rangle \leq b\},
    \end{equation*}
    so applying unimodular transforms
    don't change neither the hypothesis nor the conclusions.
    Thus,
    let $Q$ be the projection of $MP$ to $\mathbb R^{\dim P}$;
    then $Q$ is a full-dimensional polytope which satisfies $L_Q(s) = L_P(s)$,
    so we may apply Theorem~\ref{thm:only-if-part} to conclude the proof.
\end{proof}

We finish this section by remarking that
the hypothesis of $P$ being either full-dimensional or rational
is indeed necessary.
For example,
if $H$ is the set of vectors which are orthogonal to
$(\ln 2, \ln 3, \ln 5, \ln 7, \dots, \ln p_d)$
(where $p_d$ is the $d$th prime number)
and $x = (x_1, \dots, x_d)$ is an integral vector,
$x \in H$ if and only if
\begin{equation*}
    x_1 \log 2 + \dots x_d \log p_d = 0,
\end{equation*}
which (by applying $e^x$ to both sides)
we may rewrite as
\begin{equation*}
    2^{x_1} 3^{x_2} \dots p_d^{x_d} = 1,
\end{equation*}
which is only possible if $x_1 = \dots = x_d = 0$.
That is,
the only integral point in $H$ is the origin.
So,
if $P \subseteq H$,
then $L_P(s)$ will be a constant function,
regardless of any other assumption over $P$.

\section{Interiors of polytopes}
\label{sec:interiors-of-polytopes}

It is interesting to note there are results
similar to Theorems \ref{thm:characterization} and~\ref{thm:only-if-part},
but for interiors of polytopes.
More precisely:

\begin{theorem}
    Let $P$ be a full-dimensional polytope
    written as in~\eqref{intersection-representation}.
    Then $L_{P^\circ}(s) = L_{P^\circ} (\ceil s)$ for all $s \geq 0$
    if and only if all $b_i$ is either $0$ or $1$,
    and $a_i$ is integral whenever $b_i = 1$.
\end{theorem}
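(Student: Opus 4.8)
The plan is to mirror, essentially verbatim, the three results proven for the closed case (Theorems~\ref{thm:if-part} and~\ref{thm:only-if-part} and Proposition~\ref{thm:weak-only-if-part}), replacing each closed half-space $\langle a_i,x\rangle\le b_i$ by the open one $\langle a_i,x\rangle<b_i$, the floor by the ceiling, and ``entering on the right'' by the mirror-image behaviour. For the ``if'' direction I would start from
\[
    L_{P^\circ}(s)=\sum_{x\in\mathbb Z^d}\prod_{i=1}^n\big[\langle a_i,x\rangle<sb_i\big],
\]
and argue factor by factor that $[\langle a_i,x\rangle<sb_i]=[\langle a_i,x\rangle<\ceil s\,b_i]$: when $b_i=0$ the factor does not depend on $s$, and when $b_i=1$ the number $m=\langle a_i,x\rangle$ is an integer, so $[m<s]=[m<\ceil s]$ (if $s\in\mathbb Z$ this is immediate, and if $s\notin\mathbb Z$ both sides say $m\le\ceil s-1$). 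Summing over $x$ then gives $L_{P^\circ}(s)=L_{P^\circ}(\ceil s)$.

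For the ``only if'' direction assume $L_{P^\circ}(s)=L_{P^\circ}(\ceil s)$. First I would establish $0\in P$, the analogue of Proposition~\ref{thm:weak-only-if-part}. If $0\notin P$ some $b_i<0$, giving a facet $F$ with $sP^\circ\subseteq\{\langle u,x\rangle>s\}$; Lemma~\ref{thm:integral-points-in-cone} supplies an integer point $x_0$ deep inside $K=\bigcup_{\lambda>0}\lambda F$, lying in the relative interior of $s_0F$ for $s_0=\langle u,x_0\rangle>0$. As $s$ increases through $s_0$ this point leaves the open dilate, so I expect $L_{P^\circ}(s_0-\varepsilon)>L_{P^\circ}(s_0)$ for small $\varepsilon>0$; since $\ceil{s_0-\varepsilon}=\ceil{s_0}$ (whether or not $s_0$ is an integer), this already contradicts the ceiling identity. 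Having forced all $b_i\ge0$, I normalise to $b_i\in\{0,1\}$ and, for each facet with $b_i=1$ and normal $a$, run the analogue of Theorem~\ref{thm:only-if-part}: if $a$ is not integral I find (taking $\delta=\frac32$ in the lemma and shifting by a basis vector $e_j$ if $\langle a,x_0\rangle$ happens to be an integer) an integer point $x_0$ in the relative interior of $s_0F$ with $s_0=\langle a,x_0\rangle\notin\mathbb Z$. Because every $b_i\ge0$, the function $L_{P^\circ}$ is nondecreasing and $x_0$ satisfies all the remaining strict inequalities for every $s>s_0$, so $x_0\notin s_0P^\circ$ but $x_0\in\ceil{s_0}P^\circ$; hence $L_{P^\circ}(s_0)<L_{P^\circ}(\ceil{s_0})$, contradicting the hypothesis and proving $a$ integral.

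The step I expect to be delicate is the strict drop $L_{P^\circ}(s_0-\varepsilon)>L_{P^\circ}(s_0)$ in the first part: when $0\notin P$ the function $L_{P^\circ}$ need not be monotone, so a single point leaving does not by itself lower the count. I would control this exactly as in Proposition~\ref{thm:weak-only-if-part}, using uniform boundedness of the dilates $(s_0-\varepsilon)P^\circ$ to reduce to finitely many integer points, each of which changes membership only at finitely many ``critical'' dilation values $\langle a_i,x\rangle/b_i$; choosing $\varepsilon$ so that $(s_0-\varepsilon,s_0)$ contains no such value makes $L_{P^\circ}$ constant there, and at $s_0$ every point whose critical value equals $s_0$ is excluded from the open dilate, so only ``exiting'' points (such as $x_0$) are lost and the count genuinely drops. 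The second part is cleaner, since once $0\in P$ is known the monotonicity of $L_{P^\circ}$ makes the comparison with $\ceil{s_0}$ immediate. Note that the theorem is stated for full-dimensional $P$, so no unimodular reduction (as in the proof of Theorem~\ref{thm:characterization}) is needed here.
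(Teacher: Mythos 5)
Your proposal is correct and follows essentially the same route as the paper, which likewise transplants Theorem~\ref{thm:if-part}, Proposition~\ref{thm:weak-only-if-part}, and Theorem~\ref{thm:only-if-part} to the open setting: the identity $[n < x] = [n < \ceil{x}]$ for integral $n$ gives the ``if'' direction, a shrinking argument showing $L_{P^\circ}(s_0 - \varepsilon) > L_{P^\circ}(s_0)$ forces $0 \in P$, and a dilation argument at a non-integral $s_0$ (built with $\delta = \tfrac32$ and an $e_j$-shift, exactly as you describe) forces integrality of the normals with $b_i = 1$. The only cosmetic differences are that you compare $L_{P^\circ}(s_0)$ directly with $L_{P^\circ}(\ceil{s_0})$ where the paper passes through $s_0 + \varepsilon$, and your ``critical values'' bookkeeping replaces the paper's perturbation-of-strict-inequalities argument; both are sound, and you are in fact more explicit than the paper about needing $x_0$ in the relative interior of $s_0 F$ in the first part.
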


In other words,
$L_P(s) = L_P(\floor s)$ if and only if $L_{P^\circ}(s) = L_{P^\circ}(\ceil s)$.

\begin{proof}
    The proof is very similar to the proofs of the theorems~\ref{thm:if-part},
    \ref{thm:weak-only-if-part}, and~\ref{thm:only-if-part},
    so only the needed changes will be stated.

    For Theorem~\ref{thm:if-part},
    the case $b_i = 0$ is left unchanged,
    and when $b_i = 1$ we need to use $[n < x] = [n < \ceil x]$ to conclude that,
    in all cases,
    $[\langle a_i, x \rangle < s b_i]
    = \big[\langle a_i, x \rangle < \ceil s b_i\big]$.

    For Proposition~\ref{thm:weak-only-if-part},
    we can use the same $u$, $s_0$ and $x_0$,
    but now we will show $L_{P^\circ}(s_0) < L_{P^\circ}(s_0 - \varepsilon)$.
    A point $x$ which is in the interior of $s_0 P^\circ$
    satisfy all linear restrictions of the form $\langle x, a_i \rangle < s_0 b_i$.
    For all sufficiently small $\varepsilon > 0$
    we have $\langle x, a_i \rangle < (s_0 - \varepsilon) b_i$,
    and so every integer point in $s_0 P^\circ$
    will also be present on $(s_0 - \varepsilon) P^\circ$.
    Now the linear restriction $\langle u, x \rangle > 1$
    ``shrinks'' to $\langle u, x \rangle > 1 - \varepsilon$,
    and so the point $x_0$
    (which satisfy $\langle u, x_0 \rangle = 1$)
    will be contained in $(s_0 - \varepsilon) P^\circ$,
    and thus $L_{P^\circ}(s_0 - \varepsilon) > L_{P^\circ}(s_0)$.

    And for Theorem~\ref{thm:only-if-part},
    we again can use the same $s_0$ and $x_0$ and also the face $F$,
    but now we will dilate $P^\circ$ instead of shrinking.
    As $sP \subseteq s'P$ for all $s \leq s'$,
    we also have $sP^\circ \subseteq s' P^\circ$ for all $s \leq s'$.
    Now $x_0 \in s_0 F$,
    so for all $\varepsilon > 0$
    we will have $x_0 \in (s_0 + \varepsilon)P^\circ$,
    which thus show $L_P(s_0) < L_P(s_0 + \varepsilon)$.
\end{proof}

Thus,
using essentially the same proof as of Theorem~\ref{thm:characterization},
we have the following characterization of semi-reflexive polytopes.

\begin{theorem}
    Let $P$ be a rational polytope.
    Then $P$ is semi-reflexive
    if and only if
    $L_{P^\circ}(s) = L_{P^\circ} (\ceil s)$ for all real $s \geq 0$.
\end{theorem}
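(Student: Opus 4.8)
The plan is to route everything through the hyperplane characterization, exactly as in the proof of Theorem~\ref{thm:characterization}. By that theorem, the rational polytope $P$ is semi-reflexive if and only if it admits a representation~\eqref{intersection-representation} in which every $a_i$ is integral and every $b_i$ lies in $\{0,1\}$. Hence it suffices to prove that this hyperplane condition is equivalent to $L_{P^\circ}(s) = L_{P^\circ}(\ceil s)$ for all $s \geq 0$, where $P^\circ$ denotes the relative interior of $P$ (this is what keeps the statement nonvacuous in the non-full-dimensional case). I would establish this equivalence by mimicking the proof of Theorem~\ref{thm:characterization} line by line, with the full-dimensional interior theorem proved just above playing the role of Theorems~\ref{thm:if-part} and~\ref{thm:only-if-part}.

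When $P$ is full-dimensional the relative interior coincides with the interior, and the desired equivalence is precisely that full-dimensional interior theorem (for inequalities with $b_i = 0$ one clears denominators exactly as in Theorem~\ref{thm:characterization}, so that ``$a_i$ integral whenever $b_i = 1$'' upgrades to ``$a_i$ integral for all $i$'' once $P$ is rational). For the non-full-dimensional case I first record that each of the two conditions forces $0 \in P$. The hyperplane condition gives this at once, since $\langle a_i, 0 \rangle = 0 \leq b_i$ for every $i$. For the interior condition, suppose $0 \notin P$; the interior analog of Proposition~\ref{thm:weak-only-if-part} carried out in the preceding proof produces a value $s_0 > 0$ with $L_{P^\circ}(s_0) < L_{P^\circ}(s_0 - \varepsilon)$ for all sufficiently small $\varepsilon > 0$. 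For such small $\varepsilon$ we have $\ceil{s_0 - \varepsilon} = \ceil{s_0}$, so the hypothesis $L_{P^\circ}(s) = L_{P^\circ}(\ceil s)$ would force $L_{P^\circ}(s_0 - \varepsilon) = L_{P^\circ}(s_0) = L_{P^\circ}(\ceil{s_0})$, contradicting the strict inequality. (Note that, unlike in Theorem~\ref{thm:only-if-part}, there is no need to arrange $s_0$ to be a non-integer here.)

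Having $0 \in P$, the affine hull $L = \aff P$ is a rational linear subspace, so there is a unimodular transform $M$ sending $L$ onto $\mathbb R^{\dim P} \times \{0\}^{d - \dim P}$; letting $Q$ be the projection of $MP$ to $\mathbb R^{\dim P}$, the polytope $Q$ is full-dimensional and, as in Theorem~\ref{thm:characterization}, the hyperplane condition for $P$ holds if and only if it holds for $Q$. The step I expect to be the crux is the interior analog of the identity $L_Q(s) = L_P(s)$ used there, namely $L_{P^\circ}(s) = L_{Q^\circ}(s)$: one must check that $M$ carries the relative interior of $P$ to the relative interior of $MP$, which — because $MP$ lies in the coordinate subspace — is exactly the full interior of $Q$ after projection, and that this bijection matches the lattice points $L \cap \mathbb Z^d$ with $\mathbb Z^{\dim P}$. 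Once this identity is in hand, the full-dimensional interior theorem applied to $Q$ gives the equivalence of the hyperplane condition and $L_{Q^\circ}(s) = L_{Q^\circ}(\ceil s)$, and transferring both sides back along $M$ completes the proof.
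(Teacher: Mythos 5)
Your proposal is correct and takes essentially the same route as the paper: the paper's own proof of this theorem is just the one-line instruction to repeat the proof of Theorem~\ref{thm:characterization} with the full-dimensional interior theorem playing the role of Theorems~\ref{thm:if-part} and~\ref{thm:only-if-part}, which is exactly your plan (reduce to the hyperplane condition, force $0 \in P$, then use the unimodular transform and projection to a full-dimensional $Q$). Your extra care in reading $P^\circ$ as the \emph{relative} interior and in checking that $M$ matches relative interiors and lattice points, giving $L_{P^\circ}(s) = L_{Q^\circ}(s)$, only makes explicit what the paper leaves implicit.
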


\section{Relation with reflexive polytopes}
\label{sec:relation-with-reflexive}

The hyperplane representation of Theorem~\ref{thm:characterization}
may be rewritten in matricial form as follows:
\begin{equation}
    P = \{x \in \mathbb R^d \mid A_1 x \leq \mathbf 1, A_2 x \leq 0 \},
    \label{eq:matricial-representation}
\end{equation}
where $\mathbf 1 = (1, \dots, 1)$ is the all-ones vector,
and $A_1$ and $A_2$ are integer matrices.
We will use this representation to relate semi-reflexive and reflexive polytopes.

There are several equivalent definitions of reflexive polytopes
(see e.g.~\cite[p.~97]{ccd}).
We mention two of them.

First, $P$ is a reflexive polytope if it is an integer polytope
which may be written as
\begin{equation*}
    P = \{x \in \mathbb R^d \mid A x \leq \mathbf 1 \},
\end{equation*}
where $\mathbf 1 = (1, \dots, 1)$ is the all-ones vector
and $A$ is an integer matrix.
This definition make it obvious that
every reflexive polytope is also semi-reflexive.

The second definition uses the dual $P^*$ of a polytope $P$,
defined by
\begin{equation*}
    P^* = \{x \in \mathbb R^d \mid
            \langle x, y \rangle \leq 1 \text{ for all $y \in P$}
        \}.
\end{equation*}
Then $P$ is reflexive if and only if both $P$ and $P^*$ are integer polytopes.

We have the following relation between reflexive and semi-reflexive polytopes.

\begin{theorem}
    $P$ is a reflexive polytope
    if and only if
    both $P$ and $P^*$ are semi-reflexive polytopes.
\end{theorem}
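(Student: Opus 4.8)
The plan is to prove both directions by leveraging the matricial characterization~\eqref{eq:matricial-representation} of semi-reflexivity together with the two stated definitions of reflexivity. The key structural fact I would exploit is that semi-reflexivity of $P$ forces the bounding data of $P$ into the form ``$A_1 x \le \mathbf 1$, $A_2 x \le 0$'' with integer matrices, and similarly for $P^*$; the crux is to show that the ``$\le 0$'' rows cannot actually occur once we also know the dual is semi-reflexive, collapsing the description to the reflexive form $A x \le \mathbf 1$.

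For the forward direction, assume $P$ is reflexive. By the first definition, $P = \{x \mid Ax \le \mathbf 1\}$ with $A$ integral, which is exactly the form of Theorem~\ref{thm:characterization} (all $b_i = 1$, all $a_i$ integral), so $P$ is semi-reflexive. Since $P$ is reflexive, its dual $P^*$ is also reflexive (reflexivity is symmetric under duality, as $(P^*)^* = P$ and the second definition is symmetric in $P$ and $P^*$), hence $P^*$ is semi-reflexive by the same argument. This direction should be essentially immediate from the definitions.

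For the converse, suppose both $P$ and $P^*$ are semi-reflexive. Semi-reflexivity of $P$ gives, via~\eqref{eq:matricial-representation}, that $P$ contains the origin and is rational, and I would first argue that $P$ is actually full-dimensional and has $0$ in its interior: if $0$ were on the boundary, some facet would lie in a hyperplane through the origin, contributing a ``$A_2$-type'' row $\langle a, x\rangle \le 0$, and I would show this is incompatible with $P^*$ being a (bounded) polytope containing $0$ in the required way. Concretely, a facet of $P$ on a hyperplane through the origin corresponds to $P^*$ being unbounded in the direction of that facet's normal, contradicting that $P^*$ is a polytope. This eliminates the $A_2$ block, leaving $P = \{x \mid A_1 x \le \mathbf 1\}$ with $A_1$ integral. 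By the standard duality dictionary, the rows of $A_1$ are exactly the vertices of $P^*$, so $A_1$ integral means $P^*$ is an integer polytope. Symmetrically, semi-reflexivity of $P^*$ eliminates its own $A_2$ block and yields that $P^* = \{x \mid B_1 x \le \mathbf 1\}$ with $B_1$ integral, whose rows are the vertices of $P$, so $P$ is an integer polytope. With both $P$ and $P^*$ integer polytopes, the second definition gives that $P$ is reflexive.

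The main obstacle I anticipate is the step that rules out the homogeneous inequalities $\langle a_i, x\rangle \le 0$, i.e.\ showing that a semi-reflexive polytope whose dual is also semi-reflexive must have all $b_i = 1$ rather than some $b_i = 0$. The cleanest route is through the duality correspondence: a valid inequality $\langle a, x\rangle \le b$ for $P$ with $b > 0$ (after normalizing to $b = 1$) corresponds to the point $a \in P^*$, and facets of $P$ correspond to vertices of $P^*$; a bounding inequality with $b = 0$ would force $P^*$ to have a ray (be unbounded) in direction $a$, which cannot happen for a polytope. I would need to state this correspondence carefully, using that $0 \in P$ guarantees $P^*$ is bounded and that $0 \in P^*$ guarantees $P$ is bounded, so that both dual descriptions genuinely have all right-hand sides equal to $1$ and both coefficient matrices integral.
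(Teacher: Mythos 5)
Your proposal is correct and takes essentially the same approach as the paper: use boundedness of the dual to place $0$ in the interior of $P$, which eliminates the homogeneous inequalities $\langle a_i, x\rangle \leq 0$ and reduces the description to $Ax \leq \mathbf 1$ with $A$ integral, and then invoke the polar-duality dictionary (the facet normals of each polytope, normalized to right-hand side $1$, are the vertices of the other) to conclude integrality of $P$ and $P^*$. One blemish in your closing paragraph: $0 \in P$ does \emph{not} guarantee that $P^*$ is bounded (one needs $0$ in the interior of $P$); fortunately, the implication your argument actually uses --- that $P^*$ being a bounded polytope forces $0$ into the interior of $P$ --- is the correct one, and is precisely the paper's step.
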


\begin{proof}
    If $P$ is reflexive,
    then $P^*$ is also reflexive,
    and thus both are semi-reflexive.

    Now,
    suppose that $P$ and $P^*$ are semi-reflexive polytopes.
    This contains the implicit assumption that $P^*$ is bounded,
    and thus $P$ must contain the origin in its interior.
    Therefore,
    by Theorem~\ref{thm:characterization},
    we must have
    \begin{equation*}
        P = \{x \in \mathbb R^d \mid A x \leq \mathbf 1 \}
    \end{equation*}
    for some integer matrix $A$.
    (The fact that $0$ is in the interior of $P$
    allowed us to ignore $A_2$
    in the representation~\eqref{eq:matricial-representation}.)

    So, 
    we just need to show that $P$ has integer vertices.
    Since $(P^*)^* = P$,
    we may apply the same reasoning to $P^*$ to write
    \begin{equation*}
        P^* = \bigcap_{i = 1}^n \{x \in \mathbb R^d \langle a_i, x \rangle \leq 1 \}
    \end{equation*}
    for certain integers $a_1, \dots, a_n$.
    But these $a_i$ are precisely the vertices of $P$,
    being, thus, an integral polytope.
\end{proof}

\bibliographystyle{plain}
\bibliography{bib}

\end{document}